\newtheorem{theorem}{Theorem}
\newtheorem{lemma}{Lemma}
\begin{document}
\title{Decay of scattering solutions to one-dimensional free Schr\"{o}dinger equation}
\author{Yuya Dan\\Matsuyama University\\dan@cc.matsuyama-u.ac.jp}
\maketitle

\abstract{
In this paper, we investigate the decay property of scattering solutions to the initial value problem for the free Schr\"{o}dinger equation in $\mathbb{R}$.
It becomes clear that the rate of time decay is essentially determined by the behavior of the Fourier transform of initial data near the origin. The proof is described by basic calculus.
}

\section{Introduction}
Let us consider the free Schr\"{o}dinger equation
\begin{equation}
	i \frac{\partial}{\partial t} u( t, x ) = - \frac{1}{2} \frac{\partial^{2}}{\partial x^{2}} u( t, x )
	\label{Schroedinger}
\end{equation}
with initial data $u( 0, x ) = u_{0}( x )$, where $i = \sqrt{-1}$ and $u$ is a complex-valued function of $t \in \mathbb{R}$ and $x \in \mathbb{R}$. If $u_{0}$ is a squared-integrable function, then we can solve the equation (\ref{Schroedinger}),
such as
\begin{equation}
	u( t, x )
	= ( 2 \pi )^{- \frac{1}{2}} \int_{-\infty}^{\infty} e^{i x \xi} e^{- \frac{1}{2} i t \xi^{2}} \hat{u}_{0}( \xi ) \ d \xi,
\end{equation}
where $\hat{u}_{0}$ stands for the Fourier transform of $u_{0}$. For any squared-integrable function $v$ in $\mathbb{R}$, we define
\begin{equation}
	\hat{v}( \xi ) = ( 2 \pi )^{-\frac{1}{2}} \int_{-\infty}^{\infty} e^{-i x \xi} v( x ) \ dx.
\end{equation}

There are two types of solutions to the free Schr\"{o}dinger equation classifies by the behavior as $t \to \infty$.
 Let $\lambda$ be an eigenvalue of the operator $-\frac{1}{2} \frac{\partial^{2}}{\partial x^{2}}$ and $\psi$ be the corresponding eigenfunction, that is,
\begin{equation}
	-\frac{1}{2} \frac{\partial^{2}}{\partial x^{2}} \psi( x ) = \lambda \psi( x ).
\end{equation}
Then, $u( t, x ) = e^{-i \lambda t} \psi( x )$ is a solution to the equation although $u$ may not belong as a function of $x$, to the class of squared-integrable functions. This $u( t, x )$  does not decay in time as $\lvert t \rvert \to \infty$, because the absolute value of $e^{-i \lambda t}$ is one for any $t$ in $\mathbb{R}$. A solution of this type is called a bound-state solution.

On the other hand, a solution which decays in time as $\lvert t \rvert \to \infty$ is called a scattering solution. As an example of scattering solutions, we give $u_{0}( x ) = e^{-\frac{1}{2} x^{2}}$, then we can explicitly solve the equation,
\begin{equation}
	u( t, x ) = \{ 2 \pi ( 1 + i t ) \}^{-\frac{1}{2}} e^{-\frac{x^{2}}{2 ( 1 + i t )}},
\end{equation}
which decays at the rate of $\lvert t \rvert^{-\frac{1}{2}}$ as $\lvert t \rvert \to \infty$.
In this paper, we investigate the decay property of scattering solution $u( t, x )$ as $\lvert t \rvert \to \infty$.

Before describing the main result, we define function spaces. Let $\Omega$ be an open set in $\mathbb{R}$.
We say $v \in L^{2}( \Omega )$ if the quantity
\begin{equation}
	\int_{\Omega} \lvert v( x ) \rvert^{2} \ dx
	\label{quantity}
\end{equation}
is finite. $L^{2}( \Omega )$ is a Hilbert space with the inner product
\begin{equation}
	( u, v ) = \int_{\Omega} u( x ) \overline{v( x )} \ dx
\end{equation}
for any $u \in L^{2}( \Omega )$ and $v \in L^{2}( \Omega )$. We use the norm $\lVert u \Vert_{L^{2}( \Omega )} = \sqrt{( u, u )}$ throughout the paper. $\lVert u \Vert_{L^{2}( \Omega )}$ is equal to the square root of (\ref{quantity}).

Then, we can prove the following result.

\begin{theorem}
Let $x u_{0} \in L^{2}( \mathbb{R} )$.
If
\begin{equation}
	\xi^{-2} \hat{u}_{0}( \xi ) \in L^{2}( \lvert \xi \rvert < 1 )
\end{equation}
and
\begin{equation}
	\xi^{-1} \left( \frac{\partial}{\partial \xi} \hat{u}_{0} \right)( \xi ) \in L^{2}( \lvert \xi \rvert < 1 ),
\end{equation}
then
\begin{equation}
	\lvert u( t, x ) \rvert
	\leq 
	C ( 1 + \lvert x \rvert ) \lvert t \rvert^{-1},
\end{equation}
for any $t \in \mathbb{R}$ and $x \in \mathbb{R}$, where $C > 0$ is a constant which is independent both of $t$ and $x$.
\end{theorem}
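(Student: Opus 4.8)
The plan is to begin with the explicit representation of $u(t,x)$ and to integrate by parts once in $\xi$, gaining a single factor $t^{-1}$ from the oscillatory exponential. The mechanism is the identity $\xi e^{-\frac{1}{2}it\xi^{2}} = (-it)^{-1}\frac{\partial}{\partial\xi}e^{-\frac{1}{2}it\xi^{2}}$, which trades one power of $\xi$ for one power of $t^{-1}$. To supply the required factor $\xi$ I would write the amplitude as $e^{ix\xi}\hat{u}_{0}(\xi) = \xi\, g(\xi)$ with $g(\xi) = \xi^{-1}e^{ix\xi}\hat{u}_{0}(\xi)$, so that $u(t,x) = (2\pi)^{-1/2}\int_{-\infty}^{\infty}\xi\, g(\xi)\, e^{-\frac{1}{2}it\xi^{2}}\,d\xi$.

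Two preliminary observations organize the argument. First, from $u_{0}\in L^{2}(\mathbb{R})$ and $xu_{0}\in L^{2}(\mathbb{R})$ the Fourier transform satisfies $\hat{u}_{0}\in L^{2}(\mathbb{R})$ and $\frac{\partial}{\partial\xi}\hat{u}_{0}\in L^{2}(\mathbb{R})$; consequently $\hat{u}_{0}$ is continuous, bounded, and tends to $0$ as $|\xi|\to\infty$. Second, the two hypotheses near the origin force $\hat{u}_{0}(0)=0$ and $(\frac{\partial}{\partial\xi}\hat{u}_{0})(0)=0$, since otherwise $\xi^{-2}\hat{u}_{0}$ or $\xi^{-1}\frac{\partial}{\partial\xi}\hat{u}_{0}$ would fail to lie in $L^{2}(|\xi|<1)$. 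Hence $g$ extends continuously across the origin with $g(0)=0$, and $g(\xi)\to 0$ as $|\xi|\to\infty$, so the boundary contributions in the integration by parts vanish both at $0$ and at $\pm\infty$.

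Carrying out the integration by parts gives $u(t,x) = (2\pi)^{-1/2}(it)^{-1}\int_{-\infty}^{\infty}g'(\xi)\,e^{-\frac{1}{2}it\xi^{2}}\,d\xi$, where $g'(\xi) = e^{ix\xi}(ix\,\xi^{-1}\hat{u}_{0} + \xi^{-1}\frac{\partial}{\partial\xi}\hat{u}_{0} - \xi^{-2}\hat{u}_{0})$. Because both exponentials have modulus one, $|u(t,x)|$ is bounded by $(2\pi)^{-1/2}|t|^{-1}$ times the sum of the $L^{1}(\mathbb{R})$ norms of $|x|\,|\xi^{-1}\hat{u}_{0}|$, $|\xi^{-1}\frac{\partial}{\partial\xi}\hat{u}_{0}|$, and $|\xi^{-2}\hat{u}_{0}|$. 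Each of these is finite by the Cauchy--Schwarz inequality after splitting $\mathbb{R}$ into $|\xi|<1$ and $|\xi|>1$: on $|\xi|<1$ the two standing hypotheses apply directly, together with the elementary bound $|\xi^{-1}\hat{u}_{0}|\leq|\xi^{-2}\hat{u}_{0}|$ valid there; on $|\xi|>1$ one pairs $\hat{u}_{0},\frac{\partial}{\partial\xi}\hat{u}_{0}\in L^{2}$ against the convergent integrals $\int_{|\xi|>1}\xi^{-2}\,d\xi$ and $\int_{|\xi|>1}\xi^{-4}\,d\xi$. Only the first term carries the factor $|x|$, so collecting the estimates produces exactly $|u(t,x)|\leq C(1+|x|)|t|^{-1}$ with $C$ independent of $t$ and $x$.

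The step I expect to be the main obstacle is the rigorous justification of the integration by parts, namely verifying that $g$ is absolutely continuous across $\xi=0$, where the factor $\xi^{-1}$ is singular, and that the boundary terms genuinely vanish there. This is precisely where the second-order vanishing of $\hat{u}_{0}$ at the origin is used, and it explains why the near-origin behavior of $\hat{u}_{0}$—rather than the generic dispersive decay $|t|^{-1/2}$—governs the improved rate $|t|^{-1}$ asserted in the theorem. (For $t$ near $0$ the inequality is vacuous, so only $|t|$ large is at issue.)
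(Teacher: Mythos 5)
Your proposal is correct and follows essentially the same route as the paper: a single integration by parts via $\partial_{\xi} e^{-\frac{1}{2} i t \xi^{2}} = -it\xi\, e^{-\frac{1}{2} i t \xi^{2}}$, producing exactly the paper's three terms $x\,\xi^{-1}\hat{u}_{0}$, $\xi^{-2}\hat{u}_{0}$, $\xi^{-1}\partial_{\xi}\hat{u}_{0}$, each bounded in $L^{1}$ by Cauchy--Schwarz after splitting into $\lvert \xi \rvert < 1$ and $\lvert \xi \rvert \geq 1$ (the paper's three lemmas). In fact you are more careful than the paper about justifying the integration by parts (vanishing of boundary contributions at $\xi = 0$ and at $\pm\infty$, which the paper passes over silently); the only imprecision is your pointwise claim $(\partial_{\xi}\hat{u}_{0})(0) = 0$, which is not literally meaningful for an $L^{2}$ function --- what you actually need, and what the hypothesis $\xi^{-1}\partial_{\xi}\hat{u}_{0} \in L^{2}(\lvert \xi \rvert < 1)$ yields via Cauchy--Schwarz, is the bound $\lvert \hat{u}_{0}(\xi) \rvert \leq C \lvert \xi \rvert^{3/2}$ near the origin, hence $\xi^{-1}\hat{u}_{0}(\xi) \to 0$ as $\xi \to 0$.
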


It should be remarked that the region $\lvert \xi \rvert < 1$ is not essential.
This may be changed into any compact set which contains $\xi = 0$.

\section{Lemmas}

In this section, we discuss some lemmas in order to prove the main theorem later.

\begin{lemma}
Let $u_{0} \in L^{2}( \mathbb{R} )$, $\xi^{-1} \hat{u}_{0} \in L^{2}( \lvert \xi \rvert < 1 )$ and
\begin{equation}
	I_{1}( t, x ) = \int_{-\infty}^{\infty} e^{i x \xi} e^{-\frac{1}{2} i t \xi^{2}} \xi^{-1} \hat{u}_{0}( \xi ) \ d\xi.
\end{equation}
Then
\begin{equation}
	\lvert I_{1}( t, x ) \rvert \leq \sqrt{2} \lVert \xi^{-1} \hat{u}_{0}( \xi ) \rVert_{L^{2}( \lvert \xi \rvert < 1 )} + \sqrt{2} \lVert u_{0} \rVert_{L^{2}( \mathbb{R} )}.
\end{equation}
\end{lemma}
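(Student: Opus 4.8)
The plan is to split the integral defining $I_{1}( t, x )$ at $\lvert \xi \rvert = 1$ and estimate the two pieces separately, exploiting the fact that each oscillatory factor $e^{i x \xi}$ and $e^{-\frac{1}{2} i t \xi^{2}}$ has modulus one. After applying the triangle inequality the variables $t$ and $x$ disappear entirely, and only an $L^{1}$-type bound on $\xi^{-1} \hat{u}_{0}$ remains. The key observation is that the singular weight $\xi^{-1}$ must be handled differently on the two regions: near the origin I would keep it attached to $\hat{u}_{0}$ so as to invoke the hypothesis $\xi^{-1} \hat{u}_{0} \in L^{2}( \lvert \xi \rvert < 1 )$, while away from the origin I would peel $\lvert \xi \rvert^{-1}$ off as an auxiliary factor that is itself square-integrable on $\{ \lvert \xi \rvert > 1 \}$.

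Concretely, I would first bound
\begin{equation}
	\lvert I_{1}( t, x ) \rvert
	\leq \int_{\lvert \xi \rvert < 1} \lvert \xi^{-1} \hat{u}_{0}( \xi ) \rvert \ d\xi
	+ \int_{\lvert \xi \rvert > 1} \lvert \xi \rvert^{-1} \lvert \hat{u}_{0}( \xi ) \rvert \ d\xi.
\end{equation}
For the first integral, an application of the Cauchy--Schwarz inequality against the constant function $1$ on the interval $( -1, 1 )$, whose length is $2$, gives
\begin{equation}
	\int_{\lvert \xi \rvert < 1} \lvert \xi^{-1} \hat{u}_{0}( \xi ) \rvert \ d\xi
	\leq \left( \int_{\lvert \xi \rvert < 1} 1 \ d\xi \right)^{\frac{1}{2}} \lVert \xi^{-1} \hat{u}_{0}( \xi ) \rVert_{L^{2}( \lvert \xi \rvert < 1 )}
	= \sqrt{2} \, \lVert \xi^{-1} \hat{u}_{0}( \xi ) \rVert_{L^{2}( \lvert \xi \rvert < 1 )},
\end{equation}
which is precisely the first term on the right-hand side of the claimed estimate.

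For the second integral I would instead apply Cauchy--Schwarz with $\lvert \xi \rvert^{-1}$ as one factor and $\lvert \hat{u}_{0} \rvert$ as the other. Since $\int_{\lvert \xi \rvert > 1} \lvert \xi \rvert^{-2} \ d\xi = 2 \int_{1}^{\infty} \xi^{-2} \ d\xi = 2$, this yields
\begin{equation}
	\int_{\lvert \xi \rvert > 1} \lvert \xi \rvert^{-1} \lvert \hat{u}_{0}( \xi ) \rvert \ d\xi
	\leq \left( \int_{\lvert \xi \rvert > 1} \lvert \xi \rvert^{-2} \ d\xi \right)^{\frac{1}{2}} \lVert \hat{u}_{0} \rVert_{L^{2}( \lvert \xi \rvert > 1 )}
	\leq \sqrt{2} \, \lVert \hat{u}_{0} \rVert_{L^{2}( \mathbb{R} )}.
\end{equation}
A final appeal to Plancherel's theorem, $\lVert \hat{u}_{0} \rVert_{L^{2}( \mathbb{R} )} = \lVert u_{0} \rVert_{L^{2}( \mathbb{R} )}$, converts this into the second term and completes the bound. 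I do not anticipate any serious obstacle; the only point requiring care is the bookkeeping of the two distinct ways of distributing the weight $\xi^{-1}$, and in particular recognizing that it is the integrability of $\lvert \xi \rvert^{-2}$ at infinity---rather than near zero---that makes the outer region tractable under nothing more than the assumption $u_{0} \in L^{2}( \mathbb{R} )$.
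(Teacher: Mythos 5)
Your proposal is correct and follows essentially the same argument as the paper: bound the oscillatory factors by one, split the integral at $\lvert \xi \rvert = 1$, apply the Cauchy--Schwarz inequality on each region (against the constant $1$ near the origin, and against $\lvert \xi \rvert^{-1}$ at infinity), and finish with Plancherel's theorem to replace $\lVert \hat{u}_{0} \rVert_{L^{2}( \mathbb{R} )}$ by $\lVert u_{0} \rVert_{L^{2}( \mathbb{R} )}$. No gaps; the constants $\sqrt{2}$ arise exactly as in the paper's proof.
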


\begin{proof}
We have
\begin{equation}
	\begin{array}{ccl}
	\lvert I_{1}( t, x ) \rvert
	&\leq& \displaystyle\int_{-\infty}^{\infty} \lvert \xi^{-1} \hat{u}_{0}( \xi ) \rvert \ d\xi \\
	&=& \displaystyle\int_{\lvert \xi \rvert < 1} \lvert \xi^{-1} \hat{u}_{0}( \xi ) \rvert \ d\xi + \int_{\lvert \xi \rvert \geq 1} \lvert \xi^{-1} \hat{u}_{0}( \xi ) \rvert \ d\xi. \\
	\end{array}
\end{equation}
By Schwarz' inequality, we obtain
\begin{equation}
	\begin{array}{ccl}
		\displaystyle
		\int_{\lvert \xi \rvert < 1} 1 \cdot \lvert \xi^{-1} \hat{u}_{0}( \xi ) \rvert \ d\xi
		& \leq & \displaystyle \left( \int_{\lvert \xi \rvert < 1} 1^{2} \ d\xi \right)^{\frac{1}{2}}
		\left( \int_{\lvert \xi \rvert < 1} \lvert \xi^{-1} \hat{u}_{0}( \xi ) \rvert^{2} \ d\xi \right)^{\frac{1}{2}} \\
		& = & \sqrt{2} \lVert \xi^{-1} \hat{u}_{0}( \xi ) \rVert_{L^{2}( \lvert \xi \rvert < 1 )}.
	\end{array}
\end{equation}
On the other hand, we have similarly
\begin{equation}
	\int_{\lvert \xi \rvert \geq 1} \lvert \xi^{-1} \hat{u}_{0}( \xi ) \rvert \ d\xi
	\leq
	\left( \int_{\lvert \xi \rvert \geq 1} \lvert \xi \rvert^{-2} \ d\xi \right)^{\frac{1}{2}}
	\left( \int_{\lvert \xi \rvert \geq 1} \lvert \hat{u}_{0}( \xi ) \rvert^{2} \ d\xi \right)^{\frac{1}{2}}.
\end{equation}
Plancerel theorem implies
\begin{equation}
	\int_{\lvert \xi \rvert \geq 1} \lvert \hat{u}_{0}( \xi ) \rvert^{2} \ d\xi
	\leq
	\lVert \hat{u}_{0} \rVert_{L^{2}( \mathbb{R} )}^{2}
	=
	\lVert u_{0} \rVert_{L^{2}( \mathbb{R} )}^{2},
\end{equation}
which concludes our desired estimate.
\end{proof}

\begin{lemma}
Let $u_{0} \in L^{2}( \mathbb{R} )$, $\xi^{-2} \hat{u}_{0} \in L^{2}( \lvert \xi \rvert < 1 )$ and
\begin{equation}
	I_{2}( t, x ) = \int_{-\infty}^{\infty} e^{i x \xi} e^{-\frac{1}{2} i t \xi^{2}} \xi^{-2} \hat{u}_{0}( \xi ) \ d\xi.
\end{equation}
Then
\begin{equation}
	\lvert I_{2}( t, x ) \rvert \leq \sqrt{2} \lVert \xi^{-2} \hat{u}_{0}( \xi ) \rVert_{L^{2}( \lvert \xi \rvert < 1 )} + C_{2}
\end{equation}
for some $C_{2} > 0$, independent both of $t$ and $x$.
\end{lemma}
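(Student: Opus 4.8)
The plan is to mimic the proof of Lemma 1 essentially verbatim, since the two statements have identical structure and differ only in the power $\xi^{-2}$ in place of $\xi^{-1}$. First I would discard the oscillatory phase: because $|e^{ix\xi}e^{-\frac{1}{2}it\xi^{2}}| = 1$ for all $t, x, \xi$, the modulus of the integral is bounded by the $L^{1}$ norm of the integrand, reducing the problem to estimating
\begin{equation}
	\int_{-\infty}^{\infty} \lvert \xi^{-2} \hat{u}_{0}( \xi ) \rvert \ d\xi
\end{equation}
with a bound uniform in $t$ and $x$. I would then split this integral into the near region $\lvert \xi \rvert < 1$ and the far region $\lvert \xi \rvert \geq 1$.

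On the near region I would apply Schwarz' inequality against the constant function $1$, exactly as in Lemma 1, obtaining $\sqrt{2}\,\lVert \xi^{-2}\hat{u}_{0}\rVert_{L^{2}(\lvert\xi\rvert<1)}$; this is the step that invokes the hypothesis $\xi^{-2}\hat{u}_{0}\in L^{2}(\lvert\xi\rvert<1)$. On the far region I would again use Schwarz' inequality, now pairing $\lvert\xi\rvert^{-2}$ with $\lvert\hat{u}_{0}(\xi)\rvert$, to get
\begin{equation}
	\int_{\lvert \xi \rvert \geq 1} \lvert \xi^{-2} \hat{u}_{0}( \xi ) \rvert \ d\xi
	\leq
	\left( \int_{\lvert \xi \rvert \geq 1} \lvert \xi \rvert^{-4} \ d\xi \right)^{\frac{1}{2}}
	\left( \int_{\lvert \xi \rvert \geq 1} \lvert \hat{u}_{0}( \xi ) \rvert^{2} \ d\xi \right)^{\frac{1}{2}}.
\end{equation}

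The one point that genuinely must be checked—playing the role that the convergent weight integral played in Lemma 1—is that $\int_{\lvert\xi\rvert\geq 1}\lvert\xi\rvert^{-4}\,d\xi$ is finite; a direct computation gives $2\int_{1}^{\infty}\xi^{-4}\,d\xi = \tfrac{2}{3}$, so this factor is a harmless constant. Combining this with Plancherel's theorem, which bounds $\int_{\lvert\xi\rvert\geq 1}\lvert\hat{u}_{0}\rvert^{2}\,d\xi$ by $\lVert u_{0}\rVert_{L^{2}(\mathbb{R})}^{2}$, the far-region integral is at most $\sqrt{2/3}\,\lVert u_{0}\rVert_{L^{2}(\mathbb{R})}$, so setting $C_{2}=\sqrt{2/3}\,\lVert u_{0}\rVert_{L^{2}(\mathbb{R})}$ finishes the argument. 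I do not expect any real obstacle here: since the power $-2$ makes $\lvert\xi\rvert^{-4}$ decay faster at infinity than the $\lvert\xi\rvert^{-2}$ appearing in Lemma 1, the far-region estimate is if anything easier, and the resulting $C_{2}$ is manifestly independent of both $t$ and $x$.
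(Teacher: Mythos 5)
Your proof is correct and follows essentially the same route as the paper's: bound the modulus by the $L^{1}$ norm of the integrand, split at $\lvert \xi \rvert = 1$, apply Schwarz' inequality on each piece, and invoke Plancherel for the far region. If anything you are more explicit than the paper, which simply sets $C_{2} = \int_{\lvert \xi \rvert \geq 1} \lvert \xi^{-2} \hat{u}_{0}( \xi ) \rvert \, d\xi$ ``by similar discussion'' without writing out why this is finite; your computation $\int_{\lvert \xi \rvert \geq 1} \lvert \xi \rvert^{-4} \, d\xi = \tfrac{2}{3}$ together with Plancherel supplies exactly that check and yields the concrete constant $\sqrt{2/3}\, \lVert u_{0} \rVert_{L^{2}( \mathbb{R} )}$.
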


\begin{proof}
By similar discussion to the previous lemma, we obtain
\begin{equation}
	\begin{array}{ccl}
	\lvert I_{2}( t, x )
	&\rvert& \leq \displaystyle\int_{-\infty}^{\infty} \lvert \xi^{-2} \hat{u}_{0}( \xi ) \rvert \ d\xi \\
	&\leq& \sqrt{2} \lVert \xi^{-2} \hat{u}_{0}( \xi ) \rVert_{L^{2}( \lvert \xi \rvert < 1 )} + C_{2} \\
	\end{array}
\end{equation}
where
\begin{equation}
	C_{2} = \int_{\lvert \xi \rvert \geq 1} \lvert \xi^{-2} \hat{u}_{0}( \xi ) \rvert \ d\xi,
\end{equation}
which completes the proof of the lemma.
\end{proof}

In the following lemma, $\partial_{\xi}$ denotes $\frac{\partial}{\partial \xi}$.
\begin{lemma}
Let $x u_{0} \in L^{2}( \mathbb{R} )$, $\xi^{-1} \partial_{\xi} \hat{u}_{0} \in L^{2}( \lvert \xi \rvert < 1 )$ and
\begin{equation}
	I_{3}( t, x ) = \int_{-\infty}^{\infty} e^{i x \xi} e^{-\frac{1}{2} i t \xi^{2}} \xi^{-1} ( \partial_{\xi} \hat{u}_{0} )( \xi ) \ d\xi.
\end{equation}
Then
\begin{equation}
	\lvert I_{3} \rvert \leq \sqrt{2} \lVert \xi^{-1} ( \partial_{\xi} \hat{u}_{0} )( \xi ) \rVert_{L^{2}( \lvert \xi \rvert < 1 )} + C_{3}
\end{equation}
for some $C_{3} > 0$, independent both of $t$ and $x$.
\end{lemma}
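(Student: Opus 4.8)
The plan is to follow the scheme of the two preceding lemmas verbatim, supplying one new ingredient to control the tail of the integral. First I would discard the oscillatory factors: since $\lvert e^{i x \xi} \rvert = \lvert e^{-\frac{1}{2} i t \xi^{2}} \rvert = 1$, we have
\begin{equation}
	\lvert I_{3}( t, x ) \rvert \leq \int_{-\infty}^{\infty} \lvert \xi^{-1} ( \partial_{\xi} \hat{u}_{0} )( \xi ) \rvert \ d\xi = \int_{\lvert \xi \rvert < 1} \lvert \xi^{-1} ( \partial_{\xi} \hat{u}_{0} )( \xi ) \rvert \ d\xi + \int_{\lvert \xi \rvert \geq 1} \lvert \xi^{-1} ( \partial_{\xi} \hat{u}_{0} )( \xi ) \rvert \ d\xi,
\end{equation}
so the whole bound is manifestly independent of $t$ and $x$ from the start.

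For the inner piece I would apply Schwarz' inequality with the factor $1$, exactly as in the first lemma, using $\int_{\lvert \xi \rvert < 1} 1 \ d\xi = 2$, to obtain the first term $\sqrt{2} \lVert \xi^{-1} ( \partial_{\xi} \hat{u}_{0} )( \xi ) \rVert_{L^{2}( \lvert \xi \rvert < 1 )}$ of the asserted estimate; the hypothesis $\xi^{-1} \partial_{\xi} \hat{u}_{0} \in L^{2}( \lvert \xi \rvert < 1 )$ guarantees that this term is finite.

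The remaining piece I would simply name $C_{3} = \int_{\lvert \xi \rvert \geq 1} \lvert \xi^{-1} ( \partial_{\xi} \hat{u}_{0} )( \xi ) \rvert \ d\xi$, and the one genuinely new step is to check that it is a finite constant independent of $t$ and $x$. Schwarz' inequality gives
\begin{equation}
	C_{3} \leq \left( \int_{\lvert \xi \rvert \geq 1} \lvert \xi \rvert^{-2} \ d\xi \right)^{\frac{1}{2}} \left( \int_{\lvert \xi \rvert \geq 1} \lvert ( \partial_{\xi} \hat{u}_{0} )( \xi ) \rvert^{2} \ d\xi \right)^{\frac{1}{2}} \leq \sqrt{2} \ \lVert \partial_{\xi} \hat{u}_{0} \rVert_{L^{2}( \mathbb{R} )},
\end{equation}
where I used $\int_{\lvert \xi \rvert \geq 1} \lvert \xi \rvert^{-2} \ d\xi = 2$. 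The point that distinguishes this lemma from the two before it is the differentiation rule $( \partial_{\xi} \hat{u}_{0} )( \xi ) = \widehat{( -i x u_{0} )}( \xi )$, which turns the hypothesis on $x u_{0}$ into an $L^{2}$ bound on the derivative: by Plancherel's theorem $\lVert \partial_{\xi} \hat{u}_{0} \rVert_{L^{2}( \mathbb{R} )} = \lVert x u_{0} \rVert_{L^{2}( \mathbb{R} )}$, which is finite precisely because $x u_{0} \in L^{2}( \mathbb{R} )$. Hence $C_{3} \leq \sqrt{2} \ \lVert x u_{0} \rVert_{L^{2}( \mathbb{R} )} < \infty$, and adding the two pieces finishes the proof.

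I expect the main obstacle to be justifying the identity $\partial_{\xi} \hat{u}_{0} = \widehat{-i x u_{0}}$ for merely $L^{2}$ data, since a priori $\hat{u}_{0}$ need not be differentiable in the classical sense. The clean way around this is to read $x u_{0} \in L^{2}( \mathbb{R} )$ as the statement that $\hat{u}_{0}$ has a weak derivative in $L^{2}( \mathbb{R} )$, equal to the Fourier transform of $-i x u_{0}$; the estimates above then apply verbatim with this derivative, and the constant $C_{3}$ is controlled as claimed.
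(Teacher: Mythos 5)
Your proof is correct and takes essentially the same route as the paper: discard the unimodular factors, split the integral at $\lvert \xi \rvert = 1$, apply Schwarz' inequality on the inner piece, and take $C_{3}$ to be the tail integral. The only difference is that you explicitly verify that $C_{3}$ is finite, via Schwarz' inequality and the Plancherel identity $\lVert \partial_{\xi} \hat{u}_{0} \rVert_{L^{2}( \mathbb{R} )} = \lVert x u_{0} \rVert_{L^{2}( \mathbb{R} )}$, a step the paper compresses into the remark that ``by the theory of Fourier transformation, $x u_{0} \in L^{2}( \mathbb{R} )$ implies that there exists $\partial_{\xi} \hat{u}_{0}$ in $L^{2}( \mathbb{R} )$.''
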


\begin{proof}
Similary, we obtain
\begin{equation}
	\lvert I_{3}( t, x )
	\rvert \leq \int_{\lvert \xi \rvert < 1} \lvert \xi^{-1} ( \partial_{\xi} \hat{u}_{0} )( \xi ) \rvert \ d\xi
	+ \int_{\lvert \xi \rvert \geq 1} \lvert \xi^{-1} ( \partial_{\xi} \hat{u}_{0} )( \xi ) \rvert \ d\xi.
\end{equation}
By the theory of Fourier transformation, $x u_{0} \in L^{2}( \mathbb{R} )$ implies that there exists $\partial_{\xi} \hat{u}_{0}$ in $L^{2}( \mathbb{R} )$, so that
\begin{equation}
	\lvert I_{3}( t, x ) \rvert \leq \sqrt{2} \lVert \xi^{-1} ( \partial_{\xi} \hat{u}_{0} )( \xi ) \rVert_{L^{2}( \lvert \xi \rvert < 1 )} + C_{3},
\end{equation}
where
\begin{equation}
	C_{3} = \int_{\lvert \xi \rvert \geq 1} \lvert \xi^{-1} ( \partial_{\xi} \hat{u}_{0} )( \xi ) \rvert \ d\xi
\end{equation}
is a positive constant which is independent both of $t$ and $x$.
\end{proof}

\section{Proof of the main theorem}
We can prove the main theorem by basic technic in calculus. The key idea of the proof is
\begin{equation}
	\frac{\partial}{\partial \xi} e^{- \frac{1}{2} i t \xi^{2}}
	= ( - i t \xi ) e^{- \frac{1}{2} i t \xi^{2}},
\end{equation}
which implies
\begin{equation}
	u( t, x )
	= ( 2 \pi )^{- \frac{1}{2}} ( - i t )^{-1} \int_{-\infty}^{\infty} e^{i x \xi} \xi^{-1} \frac{\partial}{\partial \xi} \left[ e^{- \frac{1}{2} i t \xi^{2}} \right] \hat{u}_{0}( \xi ) \ d \xi.
\end{equation}
Integrating the above integral by part,
\begin{equation}
	\int_{-\infty}^{\infty} e^{i x \xi} \xi^{-1} \frac{\partial}{\partial \xi} \left[ e^{- \frac{1}{2} i t \xi^{2}} \right] \hat{u}_{0}( \xi ) \ d \xi
	=
	- \int_{-\infty}^{\infty}  \frac{\partial}{\partial \xi} \left[ e^{i x \xi} \xi^{-1} \hat{u}_{0}( \xi ) \right] e^{- \frac{1}{2} i t \xi^{2}} \ d \xi,
\end{equation}
if $u_{0} \in L^{2}( \mathbb{R} )$. It follows from Leibniz' rule that
\begin{equation}
	u( t, x )
	= ( 2 \pi )^{-\frac{1}{2}} t^{-1} ( - x I_{1}( t, x ) - i I_{2}( t, x ) + i I_{3}( t, x ) )
\end{equation}
where
\begin{equation}
	\begin{array}{l}
		\displaystyle
		I_{1}( t, x )
		= \int_{-\infty}^{\infty} e^{i x \xi} e^{-\frac{1}{2} i t \xi^{2}} \xi^{-1} \hat{u}_{0}( \xi ) \ d\xi \\
		\displaystyle
		I_{2}( t, x )
		= \int_{-\infty}^{\infty} e^{i x \xi} e^{-\frac{1}{2} i t \xi^{2}} \xi^{-2} \hat{u}_{0}( \xi ) \ d\xi \\
		\displaystyle
		I_{3}( t, x )
		= \int_{-\infty}^{\infty} e^{i x \xi} e^{-\frac{1}{2} i t \xi^{2}} \xi^{-1} ( \partial_{\xi} \hat{u}_{0} )( \xi ) \ d\xi. \\
	\end{array}
\end{equation}
From the results discussed in the previous section, we obtain
\begin{equation}
	\begin{array}{rl}
	\lvert u( t, x ) \rvert
	\leq 
	\displaystyle
	\frac{( 2 \pi )^{-\frac{1}{2}}}{\lvert t \rvert}
	\{ &
	( \sqrt{2} \lVert \xi^{-1} \hat{u}_{0}( \xi ) \rVert_{L^{2}( \lvert \xi \rvert < 1 )} + \sqrt{2} \lVert u_{0} \rVert_{L^{2}( \mathbb{R} )} ) \lvert x \rvert \\
	& + ( \sqrt{2} \lVert \xi^{-2} \hat{u}_{0}( \xi ) \rVert_{L^{2}( \lvert \xi \rvert < 1 )} + C_{2} ) \\
	& + ( \sqrt{2} \lVert \xi^{-1} ( \partial_{\xi} \hat{u}_{0} )( \xi ) \rVert_{L^{2}( \lvert \xi \rvert < 1 )} + C_{3} )
	\}
	\end{array}
\end{equation}
when $\xi^{-2} \hat{u}_{0}( \xi ) \in L^{2}( \lvert \xi \rvert < 1 )$ and $\xi^{-1} ( \partial_{\xi} \hat{u}_{0} )( \xi ) \in L^{2}( \lvert \xi \rvert < 1 )$.
Hence,
\begin{equation}
	\lvert u( t, x ) \rvert
	\leq 
	C ( 1 + \lvert x \rvert ) \lvert t \rvert^{-1},
\end{equation}
where $C > 0$ is a constant which depends only on the behavior of $\hat{u}$ near $\xi = 0$.
This statement completes the proof of the theorem.


\begin{thebibliography}{2}
\bibitem{Miyamoto}
M.~Miyamoto,
"The various power decays of the survival probability at long times for a free quantum particle,"
J. Phys. A: Math. Gen.
\textbf{35}
(2002)
pp.7159-7171.

\bibitem{Rauch}
J.~Rauch,
"Local decay of scattering solutions to Schr\"{o}dinger's equation,"
Commun. math. Phys.,
\textbf{61}
(1978)
pp.149-168.

\end{thebibliography}
\end{document}